\makeatletter\@addtoreset{equation}{section}\makeatother
\makeatletter\@addtoreset{subsection}{equation}\makeatother
\newcommand{\p}{\mathbb{P}}
\newcommand{\cel}{\mathbb{Z}}
\newcommand{\com}{\mathbb{C}}
\newcommand{\rea}{\mathbb{R}}
\newcommand{\map}{\longrightarrow}
\newcommand{\vol}{\mathrm{Vol}}
\newtheorem{theorem}[equation]{Theorem}
\newtheorem{prop}[equation]{Proposition}
\newtheorem{lemma}[equation]{Lemma}
\newtheorem{cor}[equation]{Corollary}
\newtheorem{defi}[equation]{Definition}
\theoremstyle{remark}
\newtheorem{remark}[equation]{Remark}
\newtheorem{ex}[equation]{Example}
\thanks{{\it MS 2020 classification}: 14E05, 14M10, 28D20}
\thanks{{\it Key words}: rational map, hypersurface, entropy}
\begin{document}

\title{One numerical obstruction for rational maps between hypersurfaces}

\author{Ilya Karzhemanov}
\address{\newline{\normalsize Laboratory of AGHA, Moscow Institute of Physics and Technology, 9 Institutskiy per., Dolgoprudny,
Moscow Region, 141701, Russia}
\newline{\it E-mail address}: karzhemanov.iv@mipt.ru}

\sloppy

\maketitle

\begin{abstract}
Given a rational dominant map $\phi: Y \dashrightarrow X$ between
two generic hypersurfaces $Y,X \subset \p^n$ of dimension $\ge 3$,
we prove (under an addition assumption on $\phi$) a
``\,Noether\,--\,Fano type\,'' inequality $m_Y \ge m_X$ for
certain (effectively computed) numerical invariants of $Y$ and
$X$.
\end{abstract}

\bigskip

\section{Introduction}
\label{section:s-0}

\refstepcounter{equation}
\subsection{Set\,-\,up}
\label{subsection:s-0-1}

Let $X \subset \p^n$ be a smooth hypersurface over $\com$ given by
an equation $f = 0$ in some projective coordinates $x_0, \ldots,
x_{n}$ on $\p^n$. Identify $x_i$ with a basis of $H^0(X,L)$ for $L
:= \mathcal{O}(1)$. We will assume in what follows that $n \ge 4$.
In particular, given two such hypersurfaces $X$ and $Y$, we have
$\text{Pic}\,X = \text{Pic}\,Y = \cel \cdot L$ (Lefschetz), so
that any rational map $\phi: Y \dashrightarrow X$ is induced by a
self\,-\,map of $\p^n$.

\begin{defi}
\label{theorem:symp} Call $\phi$ \emph{symplectic} if the
corresponding map $\p^n \dashrightarrow \p^n$ preserves, up to a
constant, the $2$\,-\,form $\displaystyle\sum_{i = 1}^n
\frac{dx_i}{x_i} \wedge \frac{d\bar{x}_i}{\bar{x}_i}$ (cf.
{\ref{subsection:s-1-1}} below). Also, call $X$
\emph{symplectically unirational}, if $Y := \p^{n-1} = (x_{n} =
0)$ and $\phi$ is symplectic.
\end{defi}

\begin{ex}
\label{example:m-t-p-n-a} Take $X = Y = (x_n = 0) \simeq \p^{n-1}$
and $\phi$ to be the \emph{Frobenius morphism} $x_i \mapsto
x_i^d$, $0 \le i \le n - 1$, for some integer $d \ge 2$. This
$\phi$ is easily seen to be symplectic.
\end{ex}

Fix a point $o \in X$ and consider the blowup $\sigma:
\widetilde{X} \map X$ of $o$. Let $\Sigma := \sigma^{-1}(o)$ be
the exceptional divisor of $\sigma$. Define the quantity (cf.
\cite[Definition 1.6]{i-i})
$$
m_X(L;o) := \sup\left\{\varepsilon \in \mathbb{Q} \ : \ \text{the
linear system} \ |N(\sigma^*L - \varepsilon \Sigma)| \ \text{is
mobile for} \ N \gg 1 \right\}.\footnote{~Recall that
``\,mobile\,'' means no divisorial component in the base locus.}
$$
It is called the \emph{mobility threshold} (of $X$) and was first
introduced by A. Corti in \cite{questions-in-bir-geom} (we will
sometimes write simply $m_X$ when the point $o$ is irrelevant).

\begin{ex}
\label{example:m-t-p-n} Let $f := x_{n}$ and hence $X \simeq \p^{n
- 1}$ is a projective subspace. Then we get $m_X(L;o) = 1$ because
the lines passing through $o$ sweep out a divisor (in particular,
the above $\varepsilon$ is always $\le 1$, while the opposite
inequality is clear --- consider the projection $X \dashrightarrow
\p^{n - 2}$ from $o$ to obtain $\varepsilon = 1$).
\end{ex}

In this paper, we will be mostly using the following equivalent
definition of $m_X$ (although the first one is more suitable for
computations):
$$
m_X(L;o) := \sup\, \frac{\text{mult}_o\,\mathcal{M}}{N},
$$
where sup is taken over all $N \ge 1$ and mobile linear systems
$\mathcal{M} \subseteq |L^{\otimes N}|$.

Let us assume from now on that $X = (f = 0)$ and $Y$ are
\emph{generic}. Here is our main result:

\begin{theorem}
\label{theorem:main} If $\phi$ is symplectic, then there exist
points $o \in X$, $o' \in Y$ such that $m_Y(L;o') \ge m_X(L;o)$.
\end{theorem}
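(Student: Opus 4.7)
The plan is to pull back a near\,-\,optimal mobile linear system from $X$ to $Y$ via $\phi$ and to use the symplectic hypothesis to control multiplicities at a distinguished preimage point. Fix $\epsilon > 0$ and choose $N \ge 1$ together with a mobile linear system $\mathcal{M} \subseteq |L^{\otimes N}|$ on $X$ satisfying $\mul_o\,\mathcal{M}/N > m_X(L;o) - \epsilon$; the point $o \in X$ is to be chosen later inside a torus chart $(\com^*)^n \subset \p^n$ adapted to the symplectic form $\omega := \sum_{i=1}^n dx_i/x_i \wedge d\bar{x}_i/\bar{x}_i$.

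Denote by $F: \p^n \dashrightarrow \p^n$ the self\,-\,map inducing $\phi$ and by $d$ its algebraic degree (so $F^*\mathcal{O}(1) \cong \mathcal{O}(d)$), and resolve the indeterminacies of $\phi$: pick a birational morphism $p: \widetilde{Y} \map Y$ such that $\widetilde{\phi} := \phi \circ p: \widetilde{Y} \map X$ is a morphism. Set $\mathcal{M}' := p_*\widetilde{\phi}^*\mathcal{M}$, a mobile linear sub\,-\,system of $|L^{\otimes dN}|$ on $Y$ (mobility being preserved since $\phi$ is dominant and $\mathcal{M}$ is mobile on $X$).

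A naive multiplicity estimate at a preimage $o' \in Y$ of $o$ only gives $\mul_{o'}\mathcal{M}' \ge \mul_o\,\mathcal{M}$, which yields the insufficient bound $m_Y(L;o') \ge m_X(L;o)/d$. The role of the symplectic assumption is to upgrade this to the sharp local estimate $\mul_{o'}\mathcal{M}' \ge d \cdot \mul_o\,\mathcal{M}$. Concretely, in the torus chart, the identity $F^*\omega = c\,\omega$ forces $F$ to be, up to a torus translation, a monomial map whose integer exponent matrix $A = (a_{ij})$ satisfies $A^\top A = c \cdot \mathrm{Id}$; this ``\,conformality\,'' is precisely what makes the local pullback of multiplicities scale by the full algebraic degree of $F$, mirroring the Frobenius model of Example \ref{example:m-t-p-n-a} where $A = d \cdot \mathrm{Id}$.

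The main obstacle is to make this local step global: one must exploit the genericity of $X$ and $Y$ (together with the Lefschetz constraint $\pic X = \pic Y = \cel \cdot L$) to produce a pair $(o, o')$ at which the torus chart meets both hypersurfaces transversally and $F$ admits the required monomial normal form, while simultaneously controlling the indeterminacy locus of $F$ so that no component of $\mathcal{M}'$ is contracted at $o'$. Once the multiplicity inequality at $(o, o')$ is established, the theorem follows from
$$
m_Y(L;o') \ge \frac{\mul_{o'}\mathcal{M}'}{dN} \ge \frac{d \cdot \mul_o\,\mathcal{M}}{dN} = \frac{\mul_o\,\mathcal{M}}{N} > m_X(L;o) - \epsilon
$$
by letting $\epsilon \to 0$.
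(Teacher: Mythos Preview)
Your central claim --- that the symplectic condition forces $\mul_{o'}\mathcal{M}' \ge d\cdot\mul_o\,\mathcal{M}$ at a torus point $o$ --- is false, and this breaks the argument. You correctly deduce that $F^*\omega = c\,\omega$ makes $F$ monomial with exponent matrix $A$ satisfying $A^{\!\top}A = c\,\mathrm{Id}$. But in logarithmic coordinates such a map is \emph{linear}, hence on the open torus $(\com^*)^n$ it is \'etale: at any torus point $o'$ mapping to $o$, the monomial map is a local analytic isomorphism, and multiplicities are \emph{preserved}, not scaled. Concretely, take the Frobenius map $x_i\mapsto x_i^d$ and $o=o'=[1:\ldots:1]$: the pullback of a general hyperplane $\sum a_ix_i=0$ through $o$ is $\sum a_ix_i^d=0$, whose multiplicity at $o'$ is still $1$ (write $x_i=1+y_i$ and expand), so $\mul_{o'}\mathcal{M}'/(dN)=1/d$, not $1$. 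The scaling you want only occurs at the toric boundary (e.g.\ at the coordinate points for Frobenius), but a generic hypersurface does not pass through those points, and at a generic point of $X\cap\{x_i=0\}$ the ramification is only in the single normal direction, which does not yield the uniform factor $d$ for an arbitrary mobile $\mathcal{M}$. Your acknowledged ``main obstacle'' is therefore not a technical matter of producing a good pair $(o,o')$: the inequality you need simply does not hold at the points you are allowed to use.

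The paper's proof avoids this local trap entirely. Instead of comparing multiplicities pointwise, it passes to the tropical side via the pairs\,-\,of\,-\,pants fibration $\pi:X\to\Pi$ and encodes $\mul_o\{s=0\}$ as the mass of an atomic measure $d\mu_{o,s}$ on the dual complex $\Pi$. The symplectic hypothesis is used not to scale multiplicities but to obtain a global measure identity $\Phi^*\pi^X_*\Omega^{n-1}=\delta_\phi\,\pi^Y_*\Omega^{n-1}$ (Lemma~\ref{theorem:meas-pre-under-f-star}); the constant $\delta_\phi$ then appears on \emph{both} sides of the entropy comparison in {\ref{subsection:s-2-2}} and cancels, yielding $M^Y\ge M^X$ with no loss of a factor $d$. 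The point is that the symplectic condition controls the \emph{pushforward of the volume form to the tropical skeleton}, which is a global statement, rather than the local ramification behaviour at any particular point of $X$.
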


Theorem~\ref{theorem:main} implies in particular that $1 = m_Y \ge
m_X$ for $Y := \p^{n - 1}$ and symplectically unirational $X$ (cf.
Corollary~\ref{theorem:uni} below). Note however that similar
estimate \emph{does not} hold for an arbitrary unirational $X$
(see {\ref{subsection:s-3-2}}).

\refstepcounter{equation}
\subsection{Discussion}
\label{subsection:s-0-2}

The main idea behind the proof of Theorem~\ref{theorem:main} is
that birational invariants of $X$ appear from a (hidden)
\emph{hyperbolic structure} on hypersurfaces. Namely, we employ
the so\,-\,called \emph{pairs\,-\,of\,-\,pants decomposition}
$\Pi$ from \cite{grisha}, which we recall briefly in
Section~\ref{section:s-1}. This brings further an analogy with
hyperbolic manifolds (especially surfaces and $3$\,-\,folds) and
their geometric invariants --- most common ones being various
types of \emph{volumes}.

This includes, as a basic example, the Euler characteristic of
Riemann surface $M$. A finer invariant is the so\,-\,called
\emph{conformal volume} $V_c(M)$. One can show that $2V_c(M)\ge
\lambda_1\text{Vol}(M)$ for the first Laplacian eigenvalue
$\lambda_1$ of $M$ (see \cite[Theorem 1]{li-yau} and corollaries
thereof). This was used, for instance, to obtain obstructions for
existence of maps between Riemann surfaces (see e.\,g. the proof
of the \emph{Surface Coverings Theorem} in \cite[\S
4]{gromov-metric-invariants} or that of \cite[Theorem
$2.\mathrm{A}_1$]{gromov-spectral-geometry}).

Further, if $M$ is a $d$\,-\,dimensional closed oriented
hyperbolic manifold, then there is a \emph{Gromov's invariant}
$\|[M]\|$ (see \cite[Chapter 6]{thurs}). It is defined in terms of
certain (probability) measures on $M$ and coincides with
$C_d\text{Vol}(M)$ for some absolute constant $C_d$. The most
fundamental property of this invariant (used in the proof of
Mostow's rigidity theorem for example) is that for a map $M_1 \map
M_2$ between two hyperbolic $M_i$ one has $\|[M_1]\| \ge
\|[M_2]\|$. The latter inequality and a close similarity between
the definitions of $V_c(~)$, $\|[~]\|$, etc. and that of $m_X$
(cf. {\ref{subsection:s-0-1}} and {\ref{subsection:s-1-2}} below)
have motivated our approach towards the proof of
Theorem~\ref{theorem:main}.

Namely, on replacing $X$ by the complex $\Pi$ mentioned above, we
recast $m_X$ in ``\,probabilistic\,'' terms (see
Section~\ref{section:s-2}). The argument here is an instance of
the \emph{(Bernoulli) law of large numbers} and allows one to give
a conceptual explanation for the estimate $m_Y \ge m_X$ (cf.
Remark~\ref{remark:ent-law-of-l-num}). On the other hand, results
in {\ref{subsection:s-1-2}} and {\ref{subsection:s-1-3}}, together
with initial definition of $m_X$ in {\ref{subsection:s-0-1}},
yield algebro\,-\,geometric applications (see
Section~\ref{section:s-3}).

Such line of thought --- associating $X \rightsquigarrow \Pi$ and
extracting geometric properties of $X$ from combinatorics of $\Pi$
(and vice versa) --- is not new. Classical case includes the
\emph{Brunn\,--\,Minkowski inequality} (see e.\,g.
\cite{gromov-b-m-ineq}). In a modern context (including the
\emph{mirror symmetry}) this viewpoint appears in \cite{gam-shen}
and \cite{ruddat-el-al} for instance. Finally, we mention the
``\,motivic\,'' part of the story, when one assigns to $X$ its
\emph{stable birational volume} $[X]_{\text{sb}}$ (see e.\,g.
\cite{nic-ott}) or its class $[X]_{\mathcal{K}}$ in the
\emph{connective $K$\,-\,theory} (see \cite{kirill}). In the
latter case, given $\phi: Y \dashrightarrow X$ as above, the
\emph{degree formula} of \cite{kirill} relating the classes
$[Y]_{\mathcal{K}}$, $[X]_{\mathcal{K}}$ may be considered as a
vast generalization of \emph{Noether\,--\,Fano inequality} (see
e.\,g. \cite[Proposition 2]{isk}) and was another motivation for
our Theorem~\ref{theorem:main}.

\bigskip

\section{Preliminaries}
\label{section:s-1}

\refstepcounter{equation}
\subsection{Pairs\,-\,of\,-\,pants complex}
\label{subsection:s-1-1}

Consider the intersection
$$
X^0 := X \cap \bigcap_{i = 0}^n (x_i \ne 0)
$$
of $X$ with the torus $(\com^*)^n \subset \p^n$ equipped with
(affine) coordinates $x_1, \ldots, x_n$. We may identify $f$ with
the Laurent polynomial defining $X^0$:
\begin{equation}
\label{eq-f} f = \sum_{\scriptscriptstyle\tiny j \in \Delta \cap
\cel^{n}}a_jt^{-v(j)}x^j,
\end{equation}
where $\Delta \subset \rea^{n}$ is the \emph{Newton polyhedron} of
$f$, $v: \Delta \cap \cel^{n} \map \rea$ is a piecewise affine
function, $a_j \in \com$ and $t > 0$ is a real parameter (cf.
\cite[{\it 6.4}]{grisha}).

Let us recall the \emph{balanced maximal dual polyhedral
$\Delta$\,-\,complex} $\Pi$ associated with $X$. It corresponds to
certain (dual) simplicial lattice subdivision of $\Delta$
associated with the corner locus of the Legendre transform of $v$;
$\Pi$ may also be identified with its moment image (see \cite[{\it
2.3} and Proposition 2.4]{grisha}). Here are some properties of
$\Pi$ we will use below:

\begin{itemize}

\item there exists a smooth $T^{n - 1}$\,-\,equivariant map (\emph{stratified $T^{n - 1}$-fibration}) $\pi: X \map
\Pi$, where $T^{n - 1} := (S^1)^{n - 1}$ consists of the arguments
of $x_i$, so that for any open lattice simplex $\Pi_0 \subset \Pi$
the preimage $\pi^{-1}(\Pi_0)$ is an \emph{open
pair\,-\,of\,-\,pants}, symplectomorphic to $H^{\circ} :=
(\displaystyle\sum_{i = 1}^n x_i = 1) \subset (\com^*)^{n}$
equipped with the $2$\,-\,form $\Omega :=
\displaystyle\frac{1}{2\sqrt{-1}}\sum_{i = 1}^n \frac{dx_i}{x_i}
\wedge \frac{d\bar{x}_i}{\bar{x}_i}$;

\smallskip

\item in fact, $X$ is glued out of the \emph{tailored} (or \emph{localized})
pants $Q^{n - 1}$ (see \cite[{\it 6.6} and Proposition
4.6]{grisha}), isotopic to $H^{\circ}$, so that $\pi$ is a
deformation retraction under the Liouville flow associated with
$\Omega$, and $\pi_*\Omega^{n - 1}$ induces the Euclidean measure
on $\Pi$;

\smallskip

\item $\Pi$ is also obtained via the \emph{tropical degeneration}
(compatibly with $\pi$): recall that $f$ depends on $t$ (see
\eqref{eq-f}) and let $t \to 0$ (for each $Q^{n - 1}$) in the
preceding constructions
--- $\Pi$ is then a Gromov\,--\,Hausdorff limit of amoebas $\mathcal{A}_t(X_0)$ (see
\cite[{\it 6.4}]{grisha}).
\end{itemize}

\refstepcounter{equation}
\subsection{Atomic (probability) measure}
\label{subsection:s-1-2}

Choose a global section $s \in H^0(X,L)\setminus{\{0\}}$ and a
point $o \in X$. We are going to construct a measure $d\mu_{o,s}$
on $\Pi$ so that $\vol\,\big(\Pi\big) = \text{mult}_{o}\left\{s =
0\right\}$ with respect to it and $d\mu_{o,s}$ is supported at the
point $\pi(o)$.\footnote{~All considerations below apply
literarily to any $L^{\otimes N}$, $N \ge 1$, in place of $L$.}

Identify $s$ with a holomorphic function on a complex neighborhood
$U \subset X$ of $o$ and consider the $(1,1)$\,-\,current $\tau :=
\displaystyle\frac{\sqrt{-1}}{2\pi}\,\partial\overline{\partial}\log|s|$.
Then $\tau$ acts on the $L^1$\,-\,forms on $U$ of degree $2n - 2$
via (Poincar\'e\,--\,Lelong)
$$
\omega \mapsto \tau(\omega) = \displaystyle\int_{s\, =\, 0}
\omega.
$$
In particular, if $\omega$ is the $(n-1)$\,-\,st power of the
Fubini\,--\,Studi form on $\p^n$ restricted to $U$, one may regard
$\tau(\omega)$ as the volume of the locus $U \cap (s = 0)$.

Let us assume from now that $U := B_o(r)$ is the Euclidean ball of
radius $r$ centered at $o$. For all $\xi \in \com$, $|\xi| < 1$,
we consider the dilations $z \mapsto \xi z$ of $U$ and the family
of pushed\,-\,forward measures $\xi_*(dm)$.

The following lemma is standard (cf. Remark~\ref{remark:con-der}
and Proposition~\ref{theorem:c-ent-m} below):

\begin{lemma}
\label{1-1-cur-lim} The limit measure
$$
\displaystyle\frac{1}{(r\xi)^{2n - 2}}\lim_{\xi \to\, 0}\
\xi_*(dm) := dm_{o,s}
$$
exists and $\displaystyle\int_U dm_{o,s} =
\mathrm{mult}_{o}\left\{s = 0\right\}$.
\end{lemma}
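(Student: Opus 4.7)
The plan is to recognise $dm$ via the Poincar\'e\,--\,Lelong formula as the area measure of the hypersurface $\{s = 0\}$, and then to identify the rescaled limit in the statement with a local density at $o$ which, by the theory of Lelong numbers, equals $\mul_o\{s = 0\}$. Specifically, Poincar\'e\,--\,Lelong gives $\displaystyle\frac{\sqrt{-1}}{2\pi}\partial\overline{\partial}\log|s| = [Z(s)]$ on $U$, where $Z(s) := \{s = 0\} \cap U$, so $dm$ is (up to a Wirtinger factor) nothing but the standard K\"ahler area measure on $Z(s) \cap U$, viewed as a measure on $U$ supported on $Z(s)$.

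Next, I would work in local holomorphic coordinates $z_1, \ldots, z_{n-1}$ on $U$ centred at $o$ and analyse the dilations $\phi_t : z \mapsto tz$, $t \in \com$, $|t| < 1$. Writing $s(z) = s_m(z) + O(|z|^{m+1})$ with $s_m$ the degree-$m$ homogeneous part and $m := \mul_o\{s = 0\}$, one obtains $s(tz) = t^m\bigl(s_m(z) + O(t)\bigr)$, whence
$$\log|s(tz)| = m\log|t| + \log|s_m(z)| + O(t).$$
Applying $\partial\overline{\partial}$, the pullback current $\phi_t^*[Z(s)]$ converges weakly, as $t \to 0$, to the current of integration $[Z(s_m)]$ along the tangent cone of $Z(s)$ at $o$. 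Combining this with the scaling of the K\"ahler form under $\phi_t$ (which to leading order differs from the flat Euclidean form by a factor $|t|^2$), the normalisation $(rt)^{2n - 2}$ in the lemma should emerge as exactly the Jacobian converting $\phi_{t,*}(dm)$ into the local density of $[Z(s_m)]$ at $o$, weighted by the volume of a ball of radius $r|t|$. The total mass of the resulting limit is then the degree of the projective hypersurface cut out by $s_m$, which equals $m$ by Wirtinger's formula (or a direct B\'ezout count on $Z(s_m)$).

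I expect the main technical obstacle to lie in the last step above: translating the pullback convergence of currents into the asserted pushforward limit of measures, and making the $r$-dependent factor $(rt)^{2n - 2}$ match exactly (in particular keeping track of all scaling conventions for $\tau_*(dm)$ versus $\phi_t^*\tau$). Once the bookkeeping is in place, the identity $\int_U dm_{o,s} = \mul_o\{s = 0\}$ is precisely the classical equality between the Lelong number $\nu([Z(s)], o)$ and the multiplicity $\mul_o\{s = 0\}$ (Thie\,/\,Demailly), applied to the current $\tau$.
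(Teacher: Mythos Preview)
Your proposal is correct and follows essentially the same route as the paper: both arguments reduce to the classical fact that the rescaled volume of $\{s=0\}$ near $o$ converges to the Lelong number, which by Thie's theorem equals $\mul_o\{s=0\}$. The paper's own proof is a one-sentence invocation of this fact, whereas you spell out the mechanism (tangent cone via the leading homogeneous part $s_m$, Wirtinger, Lelong--Thie), so your write-up is if anything more complete than the original.
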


\begin{proof}
Indeed, $\vol\,\big(\{s = 0\} \cap U\big)$ with respect to
$\displaystyle\frac{1}{(r\xi)^{2n - 2}}\ \xi_*(dm)$ tends to
$\text{mult}_{o}\left\{s = 0\right\}$, as $\xi \to 0$. This
implies that the limit of measures exists.
\end{proof}

We may assume that for $U = B_o(r)$ the radius $r \to 0$ as $t \to
0$ (cf. \eqref{eq-f}). Then it follows from
{\ref{subsection:s-1-1}} and Lemma~\ref{1-1-cur-lim} that
\begin{eqnarray}
\nonumber \pi(U) = \text{simplicial complex}\, \Pi_{0} = \text{Gromov\,--\,Hausdorff limit of}\,\mathcal{A}_t(U),\\
\label{prop-gh} s = \pi^*\ell \ \text{for a piecewise linear function}\,\ell\,\text{on}\,\Pi_{0},\\
\nonumber d\mu_{o,s,\scriptscriptstyle\tiny\Pi_{0}} :=
\pi_*dm_{o,s} = \text{measure on}\,
\Pi_{0}\, \text{supported at}\,\pi(o)\,\text{and such that}\\
\nonumber \vol\,\big(\Pi_{0}\big) =
\int_{\scriptscriptstyle\tiny\Pi_{0}}
d\mu_{o,s,\scriptscriptstyle\tiny\Pi_{0}} =
\text{mult}_{o}\left\{s = 0\right\};
\end{eqnarray}
note also that $\pi(o)$ belongs to the corner locus of $\ell$.

Finally, $d\mu_{o,s,\scriptscriptstyle\tiny\Pi_{0}}$ induces a
measure $d\mu_{o,s}$ on $\Pi\supseteq\Pi_{0}$ in the obvious way,
which concludes the construction.

\begin{remark}
\label{remark:con-der} The atomic measure $d\mu_{o,s}$ is an
instance of the \emph{convexly derived measure} from
\cite{gromov-waists} (one may also treat the above ``\,density
function\,'' $\ell$ as a discrete version of the Hessian of
$\displaystyle\frac{\sqrt{-1}}{2\pi}\,\log|s|$). The ``\,mass
concentration\,'' concept of \cite{gromov-waists} will be used
further to obtain intrinsic (bi)\,rational invariants of $X$.
\end{remark}

\refstepcounter{equation}
\subsection{Rational maps: tropicalization}
\label{subsection:s-1-3}

Let $Y \subset \p^n$ be another hypersurface, similar to $X$, with
the maximal dual complex $\Pi^Y$, projection $\pi^{Y}: Y \map
\Pi^Y$, etc. defined verbatim for $Y$. Assume also that there
exists a rational symplectic map $\phi: Y \dashrightarrow X$. Then
the constructions in {\ref{subsection:s-1-1}} and
{\ref{subsection:s-1-2}} yield a map $\Phi: \Pi^Y \map \Pi^X$,
given by some PL functions with
$\cel$\,-\,coefficients,\footnote{~We use the notation $\Pi^X = :
\Pi$ and $\pi^X := \pi$ in what follows.} so that the following
diagram commutes:
\begin{equation}
\label{com-diag} \xymatrix{
Y\ar@{->}[d]_{\pi^Y}\ar@{-->}[r]^{\phi}&X\ar@{->}[d]^{\pi^X}\\
\Pi^Y\ar@{->}[r]^{\Phi}&\Pi^X.}
\end{equation}
Note that $\Phi$ \emph{need not} necessarily be a map of
simplicial complexes.

The following lemma describes $\Phi$ as a map of \emph{measure
spaces}:

\begin{lemma}
\label{theorem:meas-pre-under-f-star} There exists a positive
number $\delta_{\phi} \in \cel$, depending only on $\phi$, such
that $\Phi^*\pi^X_*\Omega^{n - 1} = \delta_{\phi}\pi^Y_*\Omega^{n
- 1}$.\footnote{~Here $\Phi^*$ is defined with respect to the
(limiting) affine structure on $\Pi$ induced from the complex one
on $X$ (cf. {\ref{subsection:s-1-1}}).}
\end{lemma}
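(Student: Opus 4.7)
My plan is to reduce the lemma to a pointwise identity on the tropical bases by combining the commutative square \eqref{com-diag} with the symplectic condition $\phi^*\Omega = \lambda\,\Omega$ of Definition~\ref{theorem:symp} and integrating along the torus fibres of $\pi^Y$ and $\pi^X$. First, I would restrict to a top\,-\,dimensional open simplex $\Pi^X_0 \subset \Pi^X$ over which $\pi^X$ is a genuine smooth $T^{n-1}$\,-\,bundle, and to a top\,-\,dimensional stratum $\Pi^Y_0 \subset \Phi^{-1}(\Pi^X_0)$ on which $\pi^Y$ is a smooth $T^{n-1}$\,-\,bundle and $\Phi$ is affine with integer matrix. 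By genericity of $X,Y$, I may also arrange that $\phi$ is a morphism on $(\pi^Y)^{-1}(\Pi^Y_0)$.

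On such strata, commutativity of \eqref{com-diag} implies that $\phi$ restricts, fibrewise, to an unramified covering $T^{n-1}\map T^{n-1}$ of some degree $d\in\cel_{>0}$. Combined with $\phi^*\Omega^{n-1} = \lambda^{n-1}\Omega^{n-1}$, the standard fibre\,-\,integration formula yields
$$
\Phi^*\pi^X_*\Omega^{n-1} \ =\ \frac{1}{d}\,\pi^Y_*\phi^*\Omega^{n-1} \ =\ \frac{\lambda^{n-1}}{d}\,\pi^Y_*\Omega^{n-1}
$$
on $\Pi^Y_0$. Since $\phi$ is induced by a single symplectic self\,-\,map of $\p^n$ (essentially monomial on the torus), both $\lambda$ and $d$ depend only on $\phi$ and not on the chosen stratum, so the proportionality factor $\delta_{\phi} := \lambda^{n-1}/d$ is unambiguously defined as a real number. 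On the other hand, since $\Phi$ is PL with $\cel$\,-\,coefficients and both $\pi^X_*\Omega^{n-1}$ and $\pi^Y_*\Omega^{n-1}$ coincide with the lattice Euclidean measures (third bullet of {\ref{subsection:s-1-1}}), $\delta_{\phi}$ also equals the Jacobian $|\det D\Phi|$ on any affine piece of $\Phi$, which automatically lies in $\cel_{>0}$. Hence $\delta_{\phi}$ is a positive integer.

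The main obstacle I anticipate is promoting the stratum\,-\,wise identity above to a global equality of measures on all of $\Pi^Y$. This requires (i) checking that the indeterminacy locus of $\phi$, together with the lower\,-\,dimensional skeleta of $\Pi^X,\Pi^Y$, carries no mass for either of the two measures; (ii) verifying that $|\det D\Phi|$ does not jump when crossing between adjacent top\,-\,dimensional simplices of $\Pi^Y$; and (iii) controlling the interaction between the Liouville\,-\,flow retraction and the tropical limit $t\to 0$ invoked in {\ref{subsection:s-1-1}}, so that the computation can be transported consistently between strata. Step (ii) is exactly where the symplectic hypothesis on $\phi$ is indispensable: without it, the proportionality constant could in principle vary from one simplex to another, and the statement of the lemma would fail. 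Once (i)\,--\,(iii) are settled, continuity extends the identity from the open dense part to all of $\Pi^Y$, completing the argument.
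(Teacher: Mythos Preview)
Your proposal follows essentially the same route as the paper's proof: both obtain a real proportionality constant from the symplectic condition $\phi^*\Omega=\lambda\Omega$ pushed forward along $\pi^Y$, then deduce its positivity and integrality from the fact that $\pi^X_*\Omega^{n-1}$ and $\pi^Y_*\Omega^{n-1}$ are the lattice Euclidean measures and $\Phi$ is PL with $\cel$\,-\,coefficients (so the constant equals $|\det D\Phi|$ on each affine piece). The paper compresses your stratum\,-\,wise fibre\,-\,integration identity $\Phi^*\pi^X_*\Omega^{n-1}=\tfrac{1}{d}\,\pi^Y_*\phi^*\Omega^{n-1}$ and your globalization concerns (i)\,--\,(iii) into the single phrase ``by construction'', writing the two measures directly as $dy_1\wedge\ldots\wedge dy_n$ and $dl_1\wedge\ldots\wedge dl_n$; your version is more explicit, but the mechanism is identical.
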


\begin{proof}
We have
$$
\pi^Y_*\phi^*\Omega^{n - 1} = \delta_{\phi}\pi^Y_*\Omega^{n - 1}
$$
for some real $\delta_{\phi} > 0$ (cf.
Definition~\ref{theorem:symp}). Now, since $\phi$ is the
restriction of a rational self\,-\,map of $\p^n$ (see
{\ref{subsection:s-0-1}}), it follows from
{\ref{subsection:s-1-1}} and \eqref{com-diag} that
$\pi^Y_*\Omega^{n - 1}$ (resp. $\Phi^*\pi^X_*\Omega^{n - 1}$)
coincides with the measure induced by the standard one $dy_1
\wedge \ldots \wedge dy_n$ on $\rea^n$ (resp. by $dl_1 \wedge
\ldots \wedge dl_n$ for some piecewise linear functions $l_i =
l_i(y)$ with $\cel$\,-\,coefficients). It remains to observe that
$dl_1 \wedge \ldots \wedge dl_n = \delta_{\phi}dy_1 \wedge \ldots
\wedge dy_n$ by construction.
\end{proof}

\bigskip

\section{Proof of Theorem~\ref{theorem:main}}
\label{section:s-2}

\refstepcounter{equation}
\subsection{The entropy}
\label{subsection:s-2-1}

Let $\Pi \subset \rea^n$ be a simplicial complex with the standard
Borel measure $d\mu$. Fix some real number $M > 0$ and consider
various measures $d\mu_{\ell}$ on $\Pi$, supported at the corner
locus of PL functions $\ell$, such that $\displaystyle \int_{\Pi}
d\mu_{\ell} \le M$. Let $\mathcal{S} := \mathcal{S}(\Pi,M)$ be the
set of all such measures (aka functions).

Further, given an integer $N > 0$ the measure space $(\Pi,Nd\mu)
=: \Pi_N$ may be regarded as $\Pi \subset \rea^{Nn}$, embedded
diagonally, with the measure being $d\mu^N := \displaystyle\sum_{i
= 1}^N\pi_i^*d\mu$ for the $i^{\text{th}}$ factor projections
$\pi_i: \rea^{Nn} \map \rea^n$. The affine structure on $\Pi_N$ is
defined by the functions $\displaystyle\sum_{i =
1}^N\pi_i^*\ell_i$ for various PL $\ell_i$. Note that
\begin{equation}
\nonumber\frac{1}{N}\int_{\Pi_N}\sum_{i =
1}^N\pi_i^*(d\mu_{\ell_i}) \le M,
\end{equation}
i.\,e. $\displaystyle\frac{1}{N}\sum_{i =
1}^N\pi_i^*(d\mu_{\ell_i}) \in \mathcal{S}$, provided
$d\mu_{\ell_i} \in \mathcal{S}$ for all $1 \le i \le n$.

Define the measures $d\mu^N_{\ell}$ on $\Pi_N \subset \rea^{Nn}$
and the set $\mathcal{S}(\Pi_N,M) \ni
\displaystyle\frac{1}{N}\,d\mu^N_{\ell}$ similarly as above. Let
also
\begin{equation}
\label{def-c} C := \sup_{\ell\in\mathcal{S}(\Pi_N,M),\, N}
\displaystyle \frac{1}{N} \int_{\Pi_N} d\mu^N_{\ell}.
\end{equation}

\begin{prop}
\label{theorem:c-ent-m} There exists a number
$\mathrm{ent}(d\mu,\mathcal{S}) < \infty$, \emph{depending only on
$d\mu$ and $\mathcal{S}$}, such that $C =
\mathrm{ent}(d\mu,\mathcal{S})M$.
\end{prop}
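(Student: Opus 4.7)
The proposition splits into two pieces: the finiteness of the sup $C$, and its linear dependence on $M$ (with the proportionality constant $\mathrm{ent}(d\mu, \mathcal{S})$ depending only on the ambient data). My plan is to treat them in this order.

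For the linearity in $M$, one uses a direct rescaling. Any element $\frac{1}{N}d\mu^N_\ell \in \mathcal{S}(\Pi_N, M)$ becomes an element of $\mathcal{S}(\Pi_N, cM)$ upon scaling the PL function $\ell$ by a suitable power of $c$ (exploiting the homogeneity of the tropical Monge--Amp\`ere construction $\ell \mapsto d\mu_\ell$ recalled in Remark~\ref{remark:con-der}). The resulting bijection $\mathcal{S}(\Pi_N, M) \simeq \mathcal{S}(\Pi_N, cM)$ is compatible with the objective $\frac{1}{N}\int d\mu^N_\ell$, which therefore also scales by $c$. Thus $C(cM) = c\,C(M)$, and one may set $\mathrm{ent}(d\mu, \mathcal{S}) := C/M$; by construction it is independent of the particular threshold $M$.

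The core of the argument is to show $C < \infty$. Write $C_N$ for the supremum at a fixed level $N$, so that $C = \sup_N C_N$. For $N = 1$ one has $C_1 \le M$ directly from the mass constraint. For general $N$ I would combine two ingredients. First, a subadditivity under concatenation: given admissible data $\ell_1$ on $\Pi_{N_1}$ and $\ell_2$ on $\Pi_{N_2}$, the ``direct sum'' $\ell_1 \oplus \ell_2$ is admissible on $\Pi_{N_1 + N_2}$ and its total mass is additive, so that $N \mapsto C_N$ is controlled by Fekete's lemma. Second, the observation in the paragraph preceding \eqref{def-c} --- that averages $\frac{1}{N}\sum \pi_i^*(d\mu_{\ell_i})$ of $N$ elements of $\mathcal{S}$ remain in $\mathcal{S}$ --- shows that the restricted class of ``separable'' PL functions $\ell$ already saturates the constraint up to $M$, while an arbitrary admissible $\ell$ on $\Pi_N$ can be approximated by such separable ones without increasing the measure mass beyond a bounded factor.

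The main obstacle lies precisely in this last step: as $N$ grows, the affine structure on $\Pi_N \subset \rea^{Nn}$ admits strictly more PL functions than the diagonal (separable) ones, and one must ensure that this extra flexibility does not drive $C_N$ to infinity. Morally, this is the \emph{(Bernoulli) law of large numbers} to which the introduction alludes: generic PL data on $\Pi_N$ concentrates around a ``typical'' $N = 1$ datum, so that $C_N/N$ is comparable to $C_1 \le M$ up to a universal factor depending only on $d\mu$ and on the combinatorics of $\Pi$. Turning this heuristic into a quantitative bound --- via a Chebyshev- or entropy-type concentration estimate applied to the measures $d\mu^N_\ell$ --- is the delicate point, and is what I expect to occupy most of the proof.
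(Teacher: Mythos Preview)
Your handling of the linearity in $M$ is fine and amounts to the paper's first move (normalize to $M=1$). But your plan for finiteness is both incomplete and far more elaborate than what is actually needed. You correctly flag the potential obstacle---that the affine structure on $\Pi_N$ allows PL functions beyond the separable ones---and then propose to tame it with a Fekete/concentration argument that you do not actually carry out; as written, the proposal stops at a heuristic (``this is the delicate point'').

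The paper bypasses all of this with a finite-dimensional observation. After reducing to the case where $\Pi$ is a simplex, one notes that every measure $\frac{1}{N}\,d\mu^N_\ell$, \emph{regardless of $N$}, can be identified with a point (its mass center) in the dual simplex $\Pi^* \subset \rea^n$. The functional $\int_\Pi \bullet$ is then a bounded linear functional on the convex hull $\mathcal{H}_{\mu,\mathcal{S}} \subseteq \Pi^*$ of all such points, and one simply sets $\mathrm{ent}(d\mu,\mathcal{S}) := \max_{\mathcal{H}_{\mu,\mathcal{S}}} \int_\Pi \bullet$. No large-$N$ asymptotics, no subadditivity, and no law-of-large-numbers estimate are required: passing to $\Pi_N$ does not enlarge the target of the mass-center map, which already sits in a fixed compact subset of $\rea^n$. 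Your approach treats the growth in $N$ as the main enemy, whereas the paper's insight is that this $N$-dependence is illusory once one records only the mass center.
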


\begin{proof}
After normalizing we may assume that $M = 1$. Let us also assume
for transparency that $\Pi$ is a simplex.

All measures $\displaystyle\frac{1}{N}\,d\mu^N_{\ell}$ can be
identified with points (mass centers) in the dual simplex
$\Pi^*\subset\rea^n$ (compare with the proof of
\cite[4.4.A]{gromov-waists}). Let
$\mathcal{H}_{\mu,\mathcal{S}}\subseteq\Pi^*$ be the convex hull
of this set. Then
$$
\displaystyle \int_{\Pi}\bullet:\ \mathcal{H}_{\mu,\mathcal{S}}
\longrightarrow \rea_{\ge 0}
$$
is a \emph{bounded} ($\le 1$) linear functional. By definition we
obtain $C =
\displaystyle\max_{\mathcal{H}_{\mu,\mathcal{S}}}\displaystyle
\int_{\Pi}\bullet =: \text{ent}(d\mu,\mathcal{S})$ and the result
follows.
\end{proof}

\begin{remark}
\label{remark:ent-law-of-l-num} The constant $C = C^X$ resembles
the value of \emph{logarithmic rate decay function} at $d\mu$ (see
e.\,g. \cite[Lecture 4]{gromov-prob}). This suggests $C$ to be
equal the ``\,Boltzmann entropy\,'' and the estimate $C^Y \ge C^X$
in the setting of {\ref{subsection:s-1-3}} (compare with
\cite[p.\,7]{gromov-ent}). In fact, taking $d\mu_{\ell} :=
d\mu_{o,s}$ for various $s$ as in {\ref{subsection:s-1-2}}, we
will apply this probabilistic reasoning to (birational) geometry
of $X$ (see below).
\end{remark}

\refstepcounter{equation}
\subsection{The estimate}
\label{subsection:s-2-2}

Let $\Phi: \Pi^Y \map \Pi^X$ be as in {\ref{subsection:s-1-3}}.
Although $\Phi$ need not preserve the simplicial structures, we
still can find a pair of $k$\,-\,simplices $\Pi_0^X \subseteq
\Pi^X$ and $\Pi_0^Y \subseteq \Pi^Y$, $1 \le k \le n$, such that
$\Phi(\Pi_0^Y) = \Pi_0^X$.

Identify both $\Pi_0^X$ and $\Pi_0^Y$ with a simplex $\Pi$,
carrying two (Borel) measures $d\mu$ and $\delta_{\phi}\,d\mu$,
induced by $\pi^Y_*\Omega^{n - 1}$ and $\Phi^*\pi^X_*\Omega^{n -
1}$, respectively (see Lemma~\ref{theorem:meas-pre-under-f-star}).

Let us assume from now on that $\mathcal{S} := \mathcal{S}_X$
consists of PL functions $\ell$, obtained from various sections $s
= \pi^*\ell \in \mathcal{M}$ and mobile linear systems
$\mathcal{M} \subseteq |L^{\otimes N}|$, so that $d\mu_{\ell} =
\displaystyle\frac{1}{N}\,d\mu_{o,s}$ for some $o \in X$
satisfying $\pi(o) \in \Pi$ (see {\ref{subsection:s-2-1}} and
\eqref{prop-gh}). It follows from \eqref{prop-gh} that $M$ in
{\ref{subsection:s-2-1}} can be assumed to coincide with the
mobility threshold $M^X := m_X(L;o)$ (cf.
{\ref{subsection:s-0-1}}). Same considerations apply to $Y$, with
$\mathcal{S}_Y$, $M^Y := m_Y(L;o)$, etc.

\begin{lemma}
\label{theorem:ent-1} In the previous setting, we have
$\mathrm{ent}(d\mu,\mathcal{S}_X) = 1$, and similarly for
$\mathcal{S}_Y$.
\end{lemma}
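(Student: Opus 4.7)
The plan is to invoke Proposition~\ref{theorem:c-ent-m}, which reduces the assertion $\mathrm{ent}(d\mu,\mathcal{S}_X)=1$ to the equality $C=M^X$ for the constant $C$ in \eqref{def-c}. By the set-up of \ref{subsection:s-2-2}, the elements of $\mathcal{S}_X$ are precisely the measures $d\mu_\ell=\frac{1}{N}d\mu_{o,s}$ attached to sections $s=\pi^*\ell$ of mobile linear systems $\mathcal{M}\subseteq|L^{\otimes N}|$, so the content of the lemma is that the supremum defining $C$ matches the supremum defining $m_X(L;o)$.

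First I would prove the upper bound $C\le M^X$. The defining condition of $\mathcal{S}(\Pi,M)$ in \ref{subsection:s-2-1} forces $\int_\Pi d\mu_\ell \le M^X$, and by Lemma~\ref{1-1-cur-lim} together with \eqref{prop-gh} this integral equals $\mathrm{mult}_o\{s=0\}/N$, which is $\le m_X(L;o)=M^X$ by the alternative definition of $m_X$ recalled in \ref{subsection:s-0-1}. Because the $N$-fold product measure $d\mu^N_\ell=\sum_{i=1}^N\pi_i^* d\mu_{\ell_i}$ is linear in the factors, the normalised integral $\frac{1}{N}\int_{\Pi_N}d\mu^N_\ell$ inherits the bound $M^X$. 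Taking the supremum over all $N$ and admissible $\ell$ yields $C\le M^X$.

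For the reverse inequality $C\ge M^X$, I would feed the definition of $m_X(L;o)$ as a supremum into \eqref{def-c}: for every $\varepsilon>0$ there exists a mobile $\mathcal{M}\subseteq|L^{\otimes N}|$ with $\mathrm{mult}_o\mathcal{M}/N>M^X-\varepsilon$, and a generic section $s\in\mathcal{M}$ then produces $d\mu_\ell\in\mathcal{S}_X$ with $\int_\Pi d\mu_\ell>M^X-\varepsilon$. Taking $N=1$ in \eqref{def-c} --- i.\,e.\ using $\Pi_1=\Pi$ and $d\mu^1_\ell=d\mu_\ell$ --- gives $C>M^X-\varepsilon$; letting $\varepsilon\to 0$ yields $C\ge M^X$. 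Combining the two bounds, $C=M^X$, and Proposition~\ref{theorem:c-ent-m} gives $\mathrm{ent}(d\mu,\mathcal{S}_X)=1$; the identical argument applied to $Y$ produces $\mathrm{ent}(d\mu,\mathcal{S}_Y)=1$. The only delicate point is bookkeeping: one must check that $\pi(o)\in\Pi\simeq\Pi_0^X$ for a family of sections $s$ whose integrals $\mathrm{mult}_o\{s=0\}/N$ approximate $m_X(L;o)$, which is ensured by the genericity of $X$ and by the freedom in choosing the base point $o$ guaranteed in \ref{subsection:s-2-2}.
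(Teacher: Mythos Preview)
Your proposal is correct and follows essentially the same route as the paper. The paper's own proof is a one-line appeal to \eqref{def-c}, Proposition~\ref{theorem:c-ent-m}, the definition of $m_X$ via \eqref{prop-gh}, and the identification $\pi^X_*\Omega^{n-1}=d\mu$ on $\Pi=\Pi_0^X$; what you have done is unpack this reference into the explicit two-sided bound $C=M^X$, which is exactly the content the paper leaves implicit. The only ingredient you do not cite explicitly is the compatibility $\pi^X_*\Omega^{n-1}=d\mu$, but this is already absorbed into your use of \eqref{prop-gh} when identifying $\int_\Pi d\mu_\ell$ with $\mathrm{mult}_o\{s=0\}/N$.
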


\begin{proof}
This follows from \eqref{def-c} (cf.
Proposition~\ref{theorem:c-ent-m}), definition of $m_X$ (cf.
\eqref{prop-gh}), and the fact that $\pi^X_*\Omega^{n - 1} = d\mu$
on $\Pi = \Pi_0^X$ (see {\ref{subsection:s-1-1}}).
\end{proof}

\begin{prop}
\label{theorem:phi-s-q-m} For every $\ell\in\mathcal{S}(\Pi,M^X)$,
we have $d\mu_{\Phi^*\ell} = d\mu_{\tilde{\ell}}$, where
$\tilde{\ell}\in\mathcal{S}(\Pi,M^Y)$.
\end{prop}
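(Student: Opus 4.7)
The plan is to produce the sought $\tilde\ell$ by pulling the section defining $\ell$ back along $\phi$ itself and then identifying the resulting object with $\Phi^*\ell$ via the commutativity of \eqref{com-diag}.

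By the conventions of {\ref{subsection:s-2-2}}, any $\ell\in\mathcal{S}(\Pi,M^X)$ arises from a mobile linear system $\mathcal{M}\subseteq|L^{\otimes N}|$ on $X$ together with a section $s\in\mathcal{M}$ such that $s=\pi^{X*}\ell$ and $d\mu_\ell=\frac{1}{N}d\mu_{o,s}$ (see {\ref{subsection:s-1-2}}). Since $\phi$ is the restriction of a self\,-\,map of $\p^n$ of some degree $d\ge 1$, one has $\phi^*L\simeq L^{\otimes d}$, and so $\phi^*s\in H^0(Y,L^{\otimes dN})$. Commutativity of \eqref{com-diag} then forces $\phi^*s=\pi^{Y*}(\Phi^*\ell)$, which means that $\Phi^*\ell$ is precisely the PL function attached on $\Pi^Y$ to the section $\phi^*s$ by the recipe of {\ref{subsection:s-1-2}}. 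I would therefore set $\tilde\ell:=\Phi^*\ell$, viewed as an element of $\mathcal{S}_Y$ through the pair $(\phi^*\mathcal{M},\phi^*s)$ at level $dN$. With this definition the identity $d\mu_{\Phi^*\ell}=d\mu_{\tilde\ell}$ becomes tautological, as both sides equal $\frac{1}{dN}d\mu_{o',\phi^*s}$ for any $o'\in\phi^{-1}(o)$ with $\pi^Y(o')\in\Pi$.

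It remains to verify $\tilde\ell\in\mathcal{S}(\Pi,M^Y)$, i.\,e. that $\int_\Pi d\mu_{\tilde\ell}\le M^Y$. By Lemma~\ref{1-1-cur-lim} this amounts to the inequality $\frac{1}{dN}\mul_{o'}\phi^*s\le M^Y$. Taking $s$ generic in $\mathcal{M}$ gives $\mul_{o'}\phi^*s=\mul_{o'}\phi^*\mathcal{M}$, whence the definition of $m_Y=M^Y$ supplies the required bound the moment one knows $\phi^*\mathcal{M}$ is mobile in $|L^{\otimes dN}|$. Exactly this mobility is the step I expect to be the main obstacle: one has to exclude any divisorial component of the base locus of $\phi^*\mathcal{M}$, which is where the genericity of $Y$ combined with the symplectic condition (and the scaling $\delta_\phi$ from Lemma~\ref{theorem:meas-pre-under-f-star}) must be used to confine such a component inside the ramification locus of $\phi$ and then to rule it out altogether.
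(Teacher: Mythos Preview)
Your approach diverges from the paper's at exactly the point you flag as the obstacle, and the gap you identify is real. You use the \emph{full pullback} $\phi^*s\in\phi^*\mathcal{M}$ and then need $\phi^*\mathcal{M}\subseteq|L^{\otimes dN}|$ to be mobile on $Y$; this is not automatic, since the indeterminacy locus of $\phi$ restricted to $Y$ can in principle contribute a divisorial fixed component, and nothing in your argument (neither genericity of $Y$ nor the symplectic condition, nor the constant $\delta_\phi$) is actually deployed to exclude this. As written the proof is incomplete precisely where you say it is.

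The paper avoids this difficulty by working with the \emph{rational transform} $\phi^{-1}_*s\in\phi^{-1}_*\mathcal{M}$ rather than the pullback. The rational transform of a mobile linear system is mobile by definition, so membership in $\mathcal{S}(\Pi,M^Y)$ is immediate. The price is that the identity $d\mu_{\Phi^*\ell}=d\mu_{\tilde\ell}$ is no longer tautological: one must show the atomic measure attached to $\phi^{-1}_*s$ agrees with the one attached to $\Phi^*\ell$. The paper does this by excising a closed proper subset $Z\subsetneq U$ (the locus where $\phi$ fails to be a morphism), noting that $\int_{U}dm_{o,s}=\int_{U\setminus Z}dm_{o,s}$ because the measure is atomic, and then comparing over $\phi^{-1}(U\setminus Z)$ where pullback and rational transform coincide. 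In short: you made the measure identity trivial and the mobility hard; the paper makes the mobility trivial and handles the measure identity with a short excision argument.
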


\begin{proof}
It follows from \eqref{prop-gh} and Lemma~\ref{1-1-cur-lim} that
$$
\int_{\Pi} d\mu_{\ell} =
\displaystyle\frac{1}{N}\,\int_{\Pi}d\mu_{o,s} =
\displaystyle\frac{1}{N}\,\int_U dm_{o,s} =
\displaystyle\frac{1}{N}\,\int_{U \setminus Z} dm_{o,s}
$$
for any closed subset $Z \subsetneq U$. Recall that the
\emph{rational transform} $\phi^{-1}_*s$ is naturally defined as a
member of the mobile linear system $\phi^{-1}_*\mathcal{M}$. In
particular, if $\phi$ is a morphism over $U \setminus Z$, then
$$
\int_{\phi^{-1}(U \setminus Z)} dm_{o,\phi_*^{-1}s} =
\text{mult}_{o}\left\{\phi_*^{-1}s = 0\right\}.\footnote{~There is
a slight abuse of notation here --- $o$ denotes a point in both
$X$ and $Y$.}
$$
This $\phi_*^{-1}s$ defines a PL function $\tilde{\ell}$ as
earlier and we have
$$
\int_{\Pi} d\mu_{\Phi^*\ell} = \int_{\Pi} d\mu_{\tilde{\ell}}
$$
(cf. \eqref{com-diag}). The identity $d\mu_{\Phi^*\ell} =
d\mu_{\tilde{\ell}}$ follows and
$\tilde{\ell}\in\mathcal{S}(\Pi,M^Y)$ by construction.
\end{proof}

Let $C := M^Y$ be as in Proposition~\ref{theorem:c-ent-m}
($\text{ent}(d\mu,\mathcal{S}) = 1$ by Lemma~\ref{theorem:ent-1})
and $\delta_{\phi}$ as in
Lemma~\ref{theorem:meas-pre-under-f-star}. Then it follows from
Proposition~\ref{theorem:phi-s-q-m} (cf.
Remark~\ref{remark:ent-law-of-l-num}) that
$$
C\delta_{\phi} = \sup_{\tilde{\ell}\in\mathcal{S}(\Pi_N,M^Y),\, N}
\displaystyle \frac{1}{N} \int_{\Pi_N}
\delta_{\phi}d\mu^N_{\tilde{\ell}} \ge
\sup_{\ell\in\Phi^*\mathcal{S}(\Pi_N,M^X),\, N} \displaystyle
\frac{1}{N} \int_{\Pi_N} \delta_{\phi}d\mu_{\Phi^*\ell}^N =
$$
$$
= \sup_{\ell\in\mathcal{S}(\Pi_N,M^X),\, N} \displaystyle
\frac{1}{N} \int_{\Pi_N} \Phi^*d\mu_{\ell}^N =
\text{ent}(\delta_{\phi}d\mu,\mathcal{S})M^X
$$
(the last equality is due to the projection formula
$\Phi_*\Phi^*d\mu = \delta_{\phi}d\mu$ and the change of variables
in $\int$). Finally, since
$\text{ent}(\delta_{\phi}d\mu,\mathcal{S}) =
\delta_{\phi}\text{ent}(d\mu,\mathcal{S})$, we conclude that $M^Y
\ge M^X$.

\bigskip

\section{Some examples and applications}
\label{section:s-3}

\refstepcounter{equation}
\subsection{Soft}
\label{subsection:s-3-1}

Setting $Y := \p^{n-1} = (x_{n} = 0)$ we arrive at the following
immediate

\begin{cor}
\label{theorem:uni} Suppose $X$ in Theorem~\ref{theorem:main} is
symplectically unirational. Then there exists a point $o \in X$
such that $m_X(L;o) = 1$.
\end{cor}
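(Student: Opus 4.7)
The plan is immediate given Theorem~\ref{theorem:main} and Example~\ref{example:m-t-p-n}: the theorem supplies the upper bound $m_X(L;o) \le 1$ at a suitable point $o$, and an elementary linear system supplies the matching lower bound.

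First, by the assumption of symplectic unirationality there is a symplectic dominant rational map $\phi: Y = \p^{n-1} \dashrightarrow X$. Applying Theorem~\ref{theorem:main} to this $\phi$ I extract points $o \in X$, $o' \in Y$ with $m_Y(L;o') \ge m_X(L;o)$. Since $Y \simeq \p^{n-1}$, Example~\ref{example:m-t-p-n} gives $m_Y(L;o') = 1$, whence $m_X(L;o) \le 1$.

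For the reverse inequality I use the equivalent characterization $m_X(L;o) = \sup \mathrm{mult}_o \mathcal{M}/N$ and test it on a single mobile system, namely the linear subsystem $\mathcal{M} \subseteq |L|$ consisting of hyperplane sections $H \cap X$ with $o \in H$. The base locus of $\mathcal{M}$ in $\p^n$ is the scheme-theoretic intersection of all hyperplanes through $o$, which equals $\{o\}$; since $\dim X = n-1 \ge 3$, this is not a divisor on $X$, so $\mathcal{M}$ is mobile. A generic hyperplane through $o$ is transverse to $X$ at $o$ (the embedded tangent hyperplane at $o$ is the unique non-transverse one), so a generic member of $\mathcal{M}$ is smooth at $o$, yielding $\mathrm{mult}_o \mathcal{M} = 1$ with $N = 1$ and hence $m_X(L;o) \ge 1$. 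Combining the two inequalities gives the equality. The only substantive step in the argument is the invocation of Theorem~\ref{theorem:main}; the lower bound is a routine observation valid at every smooth point of $X$, so no additional obstacle arises in passing from the theorem to this corollary.
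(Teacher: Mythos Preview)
Your proof is correct and matches the paper's approach: the paper also reduces to showing $m_X \ge 1$ (the upper bound being implicit from Theorem~\ref{theorem:main} and Example~\ref{example:m-t-p-n}) and obtains this via the projection $X \dashrightarrow \p^{n-1}$ from $o$, which is precisely your linear system of hyperplane sections through $o$. You phrase the lower bound using the multiplicity definition of $m_X$ while the paper uses the mobile\,-\,threshold formulation, but these are the two equivalent descriptions given in {\ref{subsection:s-0-1}}, so the arguments coincide.
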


\begin{proof}
It suffices to prove that $m_X \ge 1$. This is done by considering
the projection $X \dashrightarrow \p^{n-1}$ from $o$ and observing
that the linear system $|\sigma^*L - a\Sigma|$ is mobile for some
$a \ge 1$ (cf. {\ref{subsection:s-0-1}}).
\end{proof}

Suppose $X$ is a \emph{quadric}. Although we do not know whether
$X$ is symplectically unirational (cf.
Example~\ref{example:m-t-p-n-a}), it is obviously rational, and
Corollary~\ref{theorem:uni} confirms that $m_X = 1$ in this case
(the latter equality can actually be proved directly by
considering families of lines on $X$ as in
Example~\ref{example:m-t-p-n}).

\begin{remark}
\label{remark:bir-hyp} It would be interesting two find out
whether any \emph{birationally isomorphic} hypersurfaces $X$ and
$Y$ as in Theorem~\ref{theorem:main} always have $m_X(L;o) =
m_Y(L;o')$ for some points $o$ and $o'$. It should also be
possible to generalize all our considerations to the case of
\emph{any smooth} $X$ and $Y$.\footnote{~Recall that initially in
{\ref{subsection:s-0-1}} these hypersurfaces were assumed
\emph{generic}.}
\end{remark}

Let us proceed with non\,-\,trivial examples distinguishing
ordinary unirationality from the symplectic one.

\refstepcounter{equation}
\subsection{Hard}
\label{subsection:s-3-2}

Suppose $\deg f = 3$ (i.\,e. $X$ is a \emph{cubic}). It is a
classical fact that $X$ is unirational (see e.\,g. \cite[Chapter
3, Corollary 1.18]{daniel}). Fix a point $o \in X$. We may assume
that $o = [1:0:\ldots :0]$, and hence $f = q_1 + q_2 + q_3$ in the
affine chart $(x_0 \ne 0)$, where $q_i = q_i(x_1,\ldots,x_n)$ are
forms of degree $i$. Arguing as in \cite[Section 1]{pukh} we
obtain that $q_1$ and $q_2$ are \emph{coprime}. Thus the linear
system $\mathcal{M} \subset |2L|$ spanned by $q_1^2$ and $q_2$ is
mobile. We conclude that $m_X \ge 3/2$, since
$\text{mult}_o\,\mathcal{M} = 3$ (cf. {\ref{subsection:s-0-1}}),
and so $X$ is \emph{not} symplectically unirational by
Corollary~\ref{theorem:uni}.

Now assume only that $X$ is smooth (cf.
Remark~\ref{remark:bir-hyp}). Then it is possible to find a
(\emph{Eckardt}) point $o \in X$ for which $m_X(L;o) = 1$ (see
\cite[Chapter 5]{daniel}). It would be interesting to study
whether such cubics are symplectically unirational.

Further, consider the case $\deg f = 4 = n$, assuming again that
the quartic $X$ is just smooth. Note that it is still unknown
whether any such $X$ is unirational.\footnote{~Although a smooth
quartic hypersurface $X$ \emph{is} unirational when $n \gg 4$ (see
\cite[Corollary 3.8]{hmp}.} Here is a classical unirational
example after Segre (cf. \cite[{\bf 9.2}]{isk-man}):
$$
X = (x_0^4 + x_0x_4^3 + x_1^4 - 6x_1^2x_2^2 + x_2^4 + x_3^4 +
x_3^3x_4 = 0).
$$
We claim that $m_X(L;o) = 1$ for some $o \in X$. Indeed, take the
hyperplane $\Pi := (x_1 - \alpha x_2 = 0)$, where $\alpha :=
\sqrt{3 + 2\sqrt{2}}$. Then $X \cap \Pi$ is a \emph{cone} in
$\p^3$ given by the equation $x_0^4 + x_0x_4^3 + x_3^4 + x_3^3x_4
= 0$. We take $o$ to be the vertex of this cone.

At the same time, if $X$ is generic, then one can show that $m_X
\ge 3/2$ by exactly the same argument as in the cubic
case.\footnote{~It is proved in \cite[{\bf A.24}]{ilya} that in
fact $m_X = 3/2$.} Thus again symplectic version of the
unirationality problem for $X$ is settled here.

\bigskip

\thanks{{\bf Acknowledgments.}
I am grateful to John Christian Ottem for valuable comments. The
work was carried out at the Center for Pure Mathematics (MIPT) and
supported by the State assignment project FSMG\,-\,202\,-\,0013.

\bigskip

\end{document}